\begin{document}

\newtheorem{thm}{Theorem}[section]
\newtheorem{cor}{Corollary}[section]
\newtheorem{lemm}{Lemma}[section]
\newtheorem{conj}{Conjecture}[section]
\theoremstyle{definition}
\newtheorem{defn}{Definition}[section]
\newtheorem{exmpl}{Example}[section]
\newtheorem{remrk}{Remark}[section]

\title{The Abundancy Index of Divisors of\\
Odd Perfect Numbers}

\author{Jose Arnaldo B. Dris \\
Far Eastern University \\
Manila, Philippines \\
jabdris@yahoo.com.ph}

\maketitle

\begin{abstract}
If $N = {q^k}{n^2}$ is an odd perfect number, where $q$ is the Euler prime, then we show that $n < q$ is sufficient for Sorli's conjecture that $k = \nu_{q}(N) = 1$ to hold. We also prove that $q^k < \frac{2}{3}{n^2}$, and that $I(q^k) < I(n)$, where $I(x)$ is the abundancy index of $x$.
\end{abstract}

\section{Introduction}
Perfect numbers are positive integral solutions to the number-theoretic equation $\sigma(N) = 2N$, where $\sigma$ is the sum-of-divisors function.  Euclid derived the general form for the even case;  Euler proved that every even perfect number is given in the Euclidean form $N = {2^{p - 1}}(2^p - 1)$ where $p$ and $2^p - 1$ are prime.  On the other hand, it is still an open question to determine the existence (or otherwise) for an odd perfect number.   Euler proved that every odd perfect number is given in the so-called Eulerian form $N = {q^k}{n^2}$ where $q \equiv k \equiv 1\pmod 4$ and $\gcd(q, n) = 1$.  (We call $q$ the Euler prime of the odd perfect number $N$, and the component $q^k$ will be called the Euler factor of $N$.)  As of February 2012, only 47 even perfect numbers are known (13 of which were found by the distributed computing project GIMPS \cite{Various}), while no single example of an odd perfect number has been found.  (Ochem and Rao of CNRS, France are currently orchestrating an effort to 
push the lower bound for an odd perfect number from the previously known ${10}^{300}$ to a significantly improved ${10}^{1500}$ (see \cite{Ochem})).  Nielsen has obtained the lower bound: $\omega(N) \ge 9$, for the number of distinct prime factors of $N$ (\cite{Nielsen1}); and the upper bound: $N < 2^{4^{\omega(N)}}$ (see \cite{Nielsen2}).

We use the following notations.  Let $\sigma(x)$ denote the sum of the divisors of the natural number $x$.  That is, let $\sigma(x) = \sum_{d|x}{d}$.  Let $\omega(x)$ denote the number of distinct prime factors of $x$.  Let $\nu_{q}(N)$ denote the highest power of $q$ that divides $N$;  that is, if $l = \nu_{q}(N)$, then ${q^l}|N$ but $q^{l+1} \nmid N$.  Let $I(x) = \sigma(x)/x$ denote the abundancy index of $x$.

Sorli conjectured in \cite{Sorli} that the exponent $k = \nu_{q}(N)$ of the Euler prime $q$ for an odd perfect number $N$ given in the Eulerian form $N = {q^k}{n^2}$, is one.

Throughout this paper, we will let $$N = {q^k}{n^2} = \displaystyle\prod_{j=1}^{\omega(N)}{{q_j}^{\beta_j}}$$ denote the canonical factorization of the odd perfect number $N$. That is, 
$$\min(q_j) = {q_1} < {q_2} < {q_3} < \dotsb < {q_{\omega(N)}} = \max(q_j).$$  
Note that $q$ is never the smallest prime divisor of $N$.  This is because $q$, being congruent to $1$ modulo $4$, satisfies $(q + 1) | \sigma(q^k) | \sigma(N) = 2N$ giving $\frac{q+1}{2} | N$, so $N$ must have a smaller odd prime divisor than $q$.

\section{Odd Perfect Numbers Circa 2008}

We begin with the following definition:

\begin{defn}\label{EulerianForm}
An odd perfect number $N$ is said to be given in Eulerian form if $N = {q^k}{n^2}$ where $q \equiv k \equiv 1\pmod 4$ and $\gcd(q, n) = 1$. 
\end{defn}

The author made the following conjecture \cite{Dris2}:

\begin{conj}\label{Conjecture1}
Suppose there is an odd perfect number given in Eulerian form.  Then ${q^k} < n$.
\end{conj}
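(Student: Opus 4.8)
The plan is to pass from the multiplicative structure of the perfectness equation to the abundancy indices and to argue there. Since $\gcd(q,n)=1$ and $\sigma$ is multiplicative, $\sigma(N)=2N$ becomes $\sigma(q^k)\sigma(n^2)=2q^kn^2$, that is,
$$I(q^k)\,I(n^2)=2.$$
First I would record the elementary two-sided bound for a prime power: since $I(q^k)=\sum_{i=0}^{k}q^{-i}$, one has $\frac{q+1}{q}<I(q^k)<\frac{q}{q-1}$, and because $q\equiv1\pmod4$ forces $q\ge5$, in fact $I(q^k)<\frac54$. I would also establish the auxiliary inequality $I(n^2)<I(n)^2$; this reduces to $\sigma(n^2)<\sigma(n)^2$, which holds on prime powers because $\sigma(p^a)^2-\sigma(p^{2a})=\frac{p(p^a-1)^2}{(p-1)^2}>0$ for $a\ge1$, and then extends to all $n>1$ by multiplicativity of $\sigma$.

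With these in hand, the first milestone, $I(q^k)<I(n)$, follows by contradiction. If instead $I(n)\le I(q^k)$, then
$$2=I(q^k)\,I(n^2)<I(q^k)\,I(n)^2\le I(q^k)^3<\left(\tfrac54\right)^3=\tfrac{125}{64}<2,$$
which is absurd. This is precisely the unconditional inequality $I(q^k)<I(n)$, and it costs nothing beyond the perfectness equation and the prime-power bounds.

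The remaining and decisive task is to upgrade this abundancy inequality to the size inequality $q^k<n$. A convenient reformulation is $q^k<n\iff q^{2k}<n^2\iff q^{3k}<N$, so the goal is to show that the Euler factor, cubed, stays below $N$. Here I would try to exploit the structural facts already available: $\omega(N)\ge9$, and the observation from the introduction that $q$ is never the least prime factor of $N$, so the smallest prime divisor $q_1<q$ divides $n$ and inflates $I(n)$ while contributing little to the magnitude of $n$. Coupled with the companion bound $q^k<\tfrac23 n^2$, one would like to amplify such a quadratic relation into the desired linear one.

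Here lies the main obstacle. The abundancy index is not monotonic with respect to magnitude, so $I(q^k)<I(n)$ by itself carries no size information whatsoever; and the gap between what the perfectness equation readily yields, namely $q^k=O(n^2)$, and the target $q^k<n$ is exactly the loss of one full power of $n$, which abundancy estimates alone do not seem able to recover. I expect that closing this gap genuinely requires new input \emph{a priori} — for instance, a bound on the exponent $k$ in the spirit of Sorli's conjecture (under which the claim collapses to $q<n$), or a sharpened lower bound for $n$ as a function of $q$ forced by $I(n^2)=2/I(q^k)$ lying so close to $2$. It is this final step, and not the abundancy comparison, that keeps the assertion at the level of a conjecture rather than a theorem.
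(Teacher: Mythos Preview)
Your assessment is correct and matches the paper exactly: the statement is labeled \emph{Conjecture} in the paper and is never proved there. What the paper offers as motivation is precisely the abundancy inequality $I(q^k)<I(n)$ (its Lemma~2.1), together with the weaker size bound $q^k<\tfrac{2}{3}n^2$ (its Corollary~2.1); Remark~2.1 explicitly admits that the author's original hope---that $I(q^k)<I(n^2)$ should force $q^k<n^2$, and by extension $q^k<n$---was a ``naive impression'' rather than a deduction. Your final paragraph diagnosing the gap is therefore exactly right and in line with the paper's own stance.

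One minor comparison on the part you \emph{do} prove: your derivation of $I(q^k)<I(n)$ proceeds by contradiction via $2=I(q^k)I(n^2)<I(q^k)^3<(5/4)^3<2$, whereas the paper argues directly, bounding $I(n)>\sqrt{I(n^2)}>\sqrt{8/5}>5/4>I(q^k)$. Both are valid; the paper's route has the small advantage of producing the explicit numerical separator $\sqrt{8/5}\approx1.2649$ between $I(q^k)$ and $I(n)$. Also, your strict inequality $\tfrac{q+1}{q}<I(q^k)$ should be non-strict (equality holds when $k=1$), though this does not affect the argument.
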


The author formulated Conjecture \ref{Conjecture1} on the basis of the following result:

\begin{lemm}\label{Lemma1}
If an odd perfect number $N$ is given in Eulerian form, then $I(q^k) < \displaystyle\frac{5}{4} < \displaystyle\sqrt{\frac{8}{5}} < I(n)$.
\end{lemm}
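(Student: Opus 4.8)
The plan is to prove the displayed chain as three separate links: the outer two inequalities $I(q^k) < \frac{5}{4}$ and $\sqrt{8/5} < I(n)$ are the substantive ones, while the middle link $\frac{5}{4} < \sqrt{8/5}$ is pure arithmetic (square both sides: $25/16 < 8/5$ because $125 < 128$).

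For the upper bound on $I(q^k)$, I would write the abundancy index as a finite geometric sum and bound it by the corresponding infinite one:
$$I(q^k) = \frac{\sigma(q^k)}{q^k} = \sum_{j=0}^{k} \frac{1}{q^{j}} < \sum_{j=0}^{\infty} \frac{1}{q^{j}} = \frac{q}{q-1}.$$
Since $q$ is prime with $q \equiv 1 \pmod 4$, the smallest value it can take is $q = 5$; as $q/(q-1)$ is decreasing in $q$, this gives $I(q^k) < q/(q-1) \le \frac{5}{4}$, and the strict inequality from the finite-versus-infinite sum already secures $I(q^k) < \frac{5}{4}$.

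For the lower bound on $I(n)$, the key observations are that $I$ is multiplicative on coprime arguments and that $N$ being perfect means $I(N) = 2$. Since $\gcd(q, n) = 1$ forces $\gcd(q^k, n^2) = 1$, we obtain $I(q^k)\, I(n^2) = I(N) = 2$, whence $I(n^2) = 2/I(q^k) > 2/(5/4) = 8/5$ by the bound just proved. It then remains to pass from $n^2$ to $n$, and here I would invoke the inequality $I(n)^2 > I(n^2)$; combined with $I(n^2) > 8/5$ this yields $I(n) > \sqrt{I(n^2)} > \sqrt{8/5}$, completing the chain.

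The hard part will be the sublemma $I(n)^2 > I(n^2)$, equivalently $\sigma(n)^2 > \sigma(n^2)$. I would prove this by multiplicativity: it suffices to check $\sigma(p^a)^2 > \sigma(p^{2a})$ for each prime power $p^a \parallel n$ with $a \geq 1$. Writing $\sigma(p^a) = (p^{a+1}-1)/(p-1)$ and $\sigma(p^{2a}) = (p^{2a+1}-1)/(p-1)$ and clearing denominators, the numerator of $\sigma(p^a)^2 - \sigma(p^{2a})$ collapses to $p(p^a - 1)^2 > 0$, which is the decisive positive quantity. Everything else is routine bookkeeping; this prime-power computation is the only place where genuine algebra is needed.
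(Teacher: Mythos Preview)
Your proof is correct and follows essentially the same route as the paper: bound $I(q^k)$ by $q/(q-1)\le 5/4$ using $q\ge 5$, convert via $I(q^k)I(n^2)=2$ to $I(n^2)>8/5$, and then pass to $I(n)$ through $I(n)^2>I(n^2)$. The only difference is that the paper obtains this last inequality by invoking the general submultiplicativity $I(ab)\le I(a)I(b)$ (with equality iff $\gcd(a,b)=1$) at $a=b=n$, whereas you prove it directly via the prime-power identity $\sigma(p^a)^2-\sigma(p^{2a})=p(p^a-1)^2/(p-1)^2$; your version is more self-contained, though you should note that $n>1$ (which is immediate from $I(n^2)>8/5$) so that at least one prime power actually contributes a strict inequality.
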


\begin{proof}
Since $q$ is the Euler prime and
$$I(N) = 2 = I(q^k)I(n^2),$$
we appeal to some quick numerical results.  Since
$$I(q^k) < \frac{q}{q - 1}$$
and $q \equiv 1\pmod 4$, we know that $q \ge 5$. Consequently, we have
$$1 < I(q^k) < \frac{5}{4} = 1.25.$$
On the other hand, 
$$I(n^2) = \frac{2}{I(q^k)}$$
so that we obtain the bounds
$$1.6 = \frac{8}{5} < I(n^2) < 2.$$
But it is also well-known (\cite{Nathanson,Sandor1,Sandor2}) that the abundancy index (as a function) satisfies the inequality
$$I(ab) \le I(a)I(b)$$
with equality occurring if and only if $\gcd(a, b) = 1$.

In particular, by setting $a = b = n$, we get
$$\frac{2}{I(q^k)} = I(n^2) < (I(n))^2$$
whereupon we get the lower bound
$$\sqrt{\frac{8}{5}} < \sqrt{\frac{2}{I(q^k)}} = \sqrt{I(n^2)} < I(n).$$
We get the rational approximation $\sqrt{8/5} \approx 1.264911$.
\end{proof}

\begin{remrk}\label{Remark1}
When Conjecture \ref{Conjecture1} was formulated in 2008, the author was under the naive impression that the divisibility constraint $\gcd(q, n) = 1$ induced an ``ordering'' property for the Euler prime-power $q^k$ and the component $n=\sqrt{N/q^k}$, in the sense that the related inequality $q^k < n^2$ followed from the result $I(q^k) < I(n^2)$.  (Indeed, the author was able to derive the (slightly) stronger result $q^k < \sigma(q^k) \le (2/3){n^2}$ \cite{Dris2}).
\end{remrk}

We reproduce the proof for a generalization of the author's result mentioned in Remark \ref{Remark1} in the following theorem.

\begin{thm}\label{Theorem}
Suppose there is an odd perfect number with canonical factorization
$$N = \displaystyle\prod_{i = 1}^{\omega(N)}{{q_i}^{\alpha_i}}$$
where the ${q_i}$'s are primes and $q_1 < q_2 < \ldots < q_{\omega(N)}$.  Then, for all $i$ with $1 \le i \le \omega(N)$, the numbers $\rho_i = \sigma(N/{{q_i}^{\alpha_i}})/{{q_i}^{\alpha_i}}$ are positive integers and satisfy $\rho_i \ge 3$.
\end{thm}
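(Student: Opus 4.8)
The plan is to extract both assertions from the single relation furnished by perfection. First I would fix $i$, write $N=q_i^{\alpha_i}m_i$ with $m_i=N/q_i^{\alpha_i}$, and note $\gcd(q_i^{\alpha_i},m_i)=1$. Since $\sigma$ is multiplicative and $N$ is perfect,
\[\sigma(q_i^{\alpha_i})\,\sigma(m_i)=\sigma(N)=2N=2\,q_i^{\alpha_i}m_i.\]
Because $\sigma(q_i^{\alpha_i})=1+q_i+\dots+q_i^{\alpha_i}\equiv 1\pmod{q_i}$, we have $\gcd(q_i^{\alpha_i},\sigma(q_i^{\alpha_i}))=1$; hence $q_i^{\alpha_i}\mid\sigma(m_i)$ and $\rho_i=\sigma(m_i)/q_i^{\alpha_i}$ is a positive integer. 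The same display yields the closed form $\rho_i=2m_i/\sigma(q_i^{\alpha_i})=2N/\bigl(q_i^{\alpha_i}\sigma(q_i^{\alpha_i})\bigr)$, which I would use throughout.

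It then remains to prove $\rho_i\ge 3$, and since $\rho_i$ is an integer this amounts to excluding $\rho_i\in\{1,2\}$. I would dispatch the Euler prime directly: if $q_i=q$ then $m_i=n^2$ and $\rho_i=2n^2/\sigma(q^k)$, so the bound $\sigma(q^k)\le\tfrac23 n^2$ recalled in Remark \ref{Remark1} gives $\rho_i\ge 3$ at once. For the remaining (non-Euler) primes $\alpha_i$ is even, so $\sigma(q_i^{\alpha_i})$ is odd while $m_i$ is odd; the identity $\rho_i\,\sigma(q_i^{\alpha_i})=2m_i$ then forces $\rho_i$ to be even, so $\rho_i\ge 2$ automatically. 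Thus the entire theorem collapses to ruling out the single value $\rho_i=2$ for non-Euler primes.

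Unwinding $\rho_i=2$ gives the equivalent conditions $m_i=\sigma(q_i^{\alpha_i})$, $N=q_i^{\alpha_i}\sigma(q_i^{\alpha_i})$, and $\sigma(\sigma(q_i^{\alpha_i}))=2q_i^{\alpha_i}$; that is, $q_i^{\alpha_i}$ would be an odd, square prime-power solution of the superperfect-type equation $\sigma(\sigma(x))=2x$. To contradict this I would exploit that $\sigma(m_i)=2q_i^{\alpha_i}$ has only the prime divisors $2$ and $q_i$, with $\nu_2(\sigma(m_i))=1$ (since $k$ is the only odd exponent occurring in $N$, the factor $\sigma(q^k)$ is the unique even one among the $\sigma(q_j^{\alpha_j})$, and $\nu_2(\sigma(q^k))=\nu_2(\sigma(N))=1$). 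By Nielsen's bound $\omega(N)\ge 9$, so $m_i=N/q_i^{\alpha_i}$ has at least eight distinct prime factors and $\sigma(m_i)=\prod_{j\ne i}\sigma(q_j^{\alpha_j})$ is a product of at least eight factors exceeding $1$, all but the Euler factor being odd. Hence at least seven of the $\sigma(q_j^{\alpha_j})$ would have to be positive powers of the \emph{single} prime $q_i$.

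The hard part will be showing that this last configuration is impossible. I would argue that for a fixed prime $q_i$ the equation $\sigma(q_j^{\alpha_j})=q_i^{c}$ can hold for only a tightly controlled set of primes $q_j$: the inequalities $q_j^{\alpha_j}<q_i^{c}<\tfrac{q_j}{q_j-1}q_j^{\alpha_j}\le\tfrac32 q_j^{\alpha_j}$ pin each admissible power $q_i^{c}$ to a short interval straddling a prime power, and combining this geometric sparsity of the powers of $q_i$ with the finiteness of the fibers of $\sigma$ should bound the number of such $q_j$ well below seven, contradicting the count above and thereby excluding $\rho_i=2$. This counting step — equivalently, the nonexistence of the odd prime-power superperfect solution — is where the genuine difficulty lies; the integrality, the closed form, the Euler-prime case, and the parity reduction are all routine.
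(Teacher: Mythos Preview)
Your reductions are sound and largely parallel the paper's: integrality via $\gcd(q_i^{\alpha_i},\sigma(q_i^{\alpha_i}))=1$, the parity split, and the recognition that $\rho_i=2$ for a non-Euler prime forces $q_i^{\alpha_i}$ (with $\alpha_i$ even) to satisfy $\sigma(\sigma(q_i^{\alpha_i}))=2q_i^{\alpha_i}$. The paper handles $\rho_i=1$ by showing it forces $q_i$ to be the Euler prime and then citing Dandapat--Hunsucker--Pomerance, and handles $\rho_i=2$ by citing Suryanarayana's theorem that no odd superperfect number has the form $p^{2\alpha}$. Two points deserve attention.

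\textbf{Circularity for the Euler prime.} You invoke $\sigma(q^k)\le\tfrac23 n^2$ from Remark~\ref{Remark1}, but within this paper that bound is precisely Corollary~\ref{Corollary}, which is \emph{derived from} the present theorem; the paper explicitly frames Theorem~\ref{Theorem} as the proof (in generalized form) of the inequality in Remark~\ref{Remark1}. The paper avoids this loop by ruling out $\rho_i=1$ via the external result of Dandapat et al. A clean self-contained alternative you overlooked: every exponent in $n^2$ is even, so $\sigma(n^2)$ is odd, hence $\rho_q=\sigma(n^2)/q^k$ is odd and $\rho_q\ne 2$ is automatic; only $\rho_q=1$ then needs work.

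\textbf{The ``hard part'' is not merely hard but missing.} Your sparsity sketch does not close the gap. The observation $q_j^{\alpha_j}<q_i^{c}<\tfrac32\,q_j^{\alpha_j}$ constrains, for each \emph{fixed} exponent $c$, which prime powers can map to $q_i^{c}$ under $\sigma$; it says nothing about how many distinct primes $q_j$ can arise as $c$ ranges over \emph{all} positive integers, and there are infinitely many such $c$. Likewise, ``finiteness of the fibers of $\sigma$'' bounds the preimage of a single value, not the union of preimages over all powers of $q_i$. So the outlined argument yields no finite bound at all, let alone one below seven. This is exactly why the paper does not attempt a direct argument here but appeals to Suryanarayana's 1973 theorem, whose proof proceeds by entirely different methods.
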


\begin{proof}
Since $$N = \displaystyle\prod_{i = 1}^{\omega(N)}{{q_i}^{\alpha_i}}$$ is an odd perfect number and ${{q_i}^{\alpha_i}} || N$ $\forall i$, then the quantity $\rho_i = \sigma(N/{{q_i}^{\alpha_i}})/{{q_i}^{\alpha_i}}$ is an integer (because $\gcd({q_i}^{\alpha_i}, \sigma({{q_i}^{\alpha_i}})) = 1$).

Suppose $\rho_i = 1$.  Then $\sigma(N/{{q_i}^{\alpha_i}}) = {q_i}^{\alpha_i}$ and $\sigma({q_i}^{\alpha_i}) = {2N}/{{q_i}^{\alpha_i}}$.  Since $N$ is an odd perfect number, $q_i$ is odd, whereupon we have an odd $\alpha_i$ by considering parity conditions from the last equation.  But this means that $q_i$ is the Euler prime $q$, and we rewrite the equations using ${q_i}^{\alpha_i} = q^k$ and $N/{{q_i}^{\alpha_i}} = N/{q^k} = n^2$, giving $\sigma(q^k) = 2{n^2}$ and $\sigma(n^2) = q^k$.  This contradicts Dandapat, et.~al. \cite{Dandapat} who showed in 1975 that no odd perfect number satisfies these constraints.  This implies that $\rho_i \ne 1$.

Suppose $\rho_i = 2$.  Then $\sigma(N/{{q_i}^{\alpha_i}}) = 2{q_i}^{\alpha_i}$ and $\sigma({q_i}^{\alpha_i}) = N/{{q_i}^{\alpha_i}}$.  Since $N/{{q_i}^{\alpha_i}}$ is odd, then the last equation gives $\alpha_i$ is even.  Applying the $\sigma$ function to both sides of the last equation, we get $\sigma(\sigma({q_i}^{\alpha_i})) = \sigma(N/{{q_i}^{\alpha_i}}) = 2{q_i}^{\alpha_i}$.  This last equation implies that ${q_i}^{\alpha_i}$ is superperfect.  This contradicts Suryanarayana \cite{Suryanarayana} who showed in 1973 that ``There is no odd superperfect number of the form $p^{2\alpha}$'' (where $p$ is prime).  This implies that $\rho_i \ne 2$.  Since $\rho_i \in \mathbb{N}$, $\rho_i \ge 3$ and we are done.
\end{proof}

\begin{cor}\label{Corollary}
If an odd perfect number $N$ is given in Eulerian form, then $q^k < (2/3)n^2$.
\end{cor}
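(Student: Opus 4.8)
The plan is to specialize the Theorem to the single index $i$ for which $q_i^{\alpha_i}$ is the Euler factor $q^k$, since the Theorem already does all the heavy lifting. For that index we have $N/q_i^{\alpha_i} = N/q^k = n^2$, so the associated integer is $\rho = \sigma(n^2)/q^k$, and the Theorem guarantees $\rho \ge 3$.

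Next I would convert $\rho$ into a ratio involving only $q^k$ and $n^2$, using the defining equation $\sigma(N) = 2N$ of a perfect number. Because $q^k \| N$ and $\gcd(q^k, n^2) = 1$, multiplicativity of $\sigma$ gives $\sigma(q^k)\sigma(n^2) = 2 q^k n^2$, whence $\sigma(n^2) = 2 q^k n^2/\sigma(q^k)$ and therefore $\rho = \sigma(n^2)/q^k = 2n^2/\sigma(q^k)$. Combining this identity with the bound $\rho \ge 3$ yields $2 n^2 \ge 3\,\sigma(q^k)$.

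Finally, to pass from $\ge$ to the strict inequality claimed, I would invoke the elementary fact $\sigma(q^k) = 1 + q + \cdots + q^k > q^k$, valid for every $k \ge 1$. Chaining the two inequalities gives $2n^2 \ge 3\,\sigma(q^k) > 3 q^k$, i.e.\ $q^k < (2/3)n^2$, as desired.

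The main obstacle, such as it is, lies not in any single computation but in the recognition that the Theorem's lower bound $\rho \ge 3$, once rewritten as $2n^2/\sigma(q^k) \ge 3$ via the perfect-number equation, is essentially the target inequality after discarding the (strictly positive) lower-order terms of $\sigma(q^k)$. Indeed, the strictness $\sigma(q^k) > q^k$ is exactly what upgrades the non-strict bound $2n^2 \ge 3\sigma(q^k)$ to the strict conclusion $q^k < (2/3)n^2$; everything else is a direct application of results already established in the excerpt.
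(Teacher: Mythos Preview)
Your argument is correct and is exactly the derivation the paper has in mind: specializing the Theorem to the Euler factor gives $\rho=\sigma(n^2)/q^k=2n^2/\sigma(q^k)\ge 3$, whence $q^k<\sigma(q^k)\le (2/3)n^2$, which is precisely the chain flagged in Remark~\ref{Remark1}. Nothing needs to be changed.
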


Next, we define the functions $L(q)$ and $U(q)$.

\begin{defn}\label{LowerandUpperBounds}
If $q$ is the Euler prime of an odd perfect number $N$ given in Eulerian form, then 
$$L(q) = (3q^2 -4q + 2)/(q(q - 1))$$ 
and 
$$U(q) = (3q^2 + 2q + 1)/(q(q + 1)).$$  
\end{defn}

The author obtained the following results in the same year (2008).

\begin{lemm}\label{Lemma2}
Let $N$ be an odd perfect number given in Eulerian form.  Then we have the bounds $L(q) < I(q^k) + I(n^2) \le U(q)$.
\end{lemm}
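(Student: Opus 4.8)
The plan is to reduce everything to a single-variable analysis. Writing $x = I(q^k)$, the perfectness of $N$ together with $\gcd(q^k, n^2) = 1$ gives $I(q^k)I(n^2) = I(N) = 2$, so $I(n^2) = 2/x$ and the quantity of interest becomes $f(x) := x + 2/x$. Thus the two-sided bound on $I(q^k) + I(n^2)$ reduces entirely to controlling $f$ over the range of admissible values of $x$.

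First I would pin down that range. Since $I(q^k) = (1 + q + \cdots + q^k)/q^k$ is strictly increasing in $k$, with smallest value $I(q) = (q+1)/q$ (at $k = 1$) and supremum $q/(q-1)$ (as $k \to \infty$, never attained), we have $\frac{q+1}{q} \le x < \frac{q}{q-1}$.

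The key observation — and the one piece of structure the argument really turns on — is that this entire interval lies to the left of the minimizer of $f$. Indeed $f'(x) = 1 - 2/x^2 < 0$ for $x < \sqrt{2}$, and because $q$ is the Euler prime we have $q \ge 5$ (it is not the smallest prime divisor of $N$, and $q \equiv 1 \pmod 4$), so $x < \frac{q}{q-1} \le \frac{5}{4} < \sqrt{2}$. Hence $f$ is strictly decreasing on the whole admissible interval, and so it attains its maximum at the left endpoint and its infimum at the right endpoint.

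It then remains only to evaluate $f$ at the two endpoints, which is routine algebra. At $x = (q+1)/q$ one obtains $f = \frac{3q^2 + 2q + 1}{q(q+1)} = U(q)$, and this value is actually attained (namely when $k = 1$), yielding $I(q^k) + I(n^2) \le U(q)$. Taking the limit $x \to \frac{q}{q-1}$ gives $f \to \frac{3q^2 - 4q + 2}{q(q-1)} = L(q)$; since this endpoint is never attained, the bound is strict, i.e. $L(q) < I(q^k) + I(n^2)$. Combining the two yields $L(q) < I(q^k) + I(n^2) \le U(q)$, as claimed. I expect the only genuinely delicate point to be justifying $x < \sqrt{2}$, which fixes the sign of $f'$ and hence the direction of monotonicity; everything after that is the bookkeeping of the two endpoint evaluations.
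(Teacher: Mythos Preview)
Your argument is correct. You reduce the problem to the monotonicity of the one-variable function $f(x)=x+2/x$ on the interval $\bigl[\tfrac{q+1}{q},\tfrac{q}{q-1}\bigr)$, checking via the sign of $f'(x)=1-2/x^2$ that the whole interval lies to the left of the minimizer $\sqrt{2}$ (using $q\ge 5$), and then evaluating $f$ at the two endpoints to obtain $U(q)$ and $L(q)$ with the correct strict/non-strict inequalities.

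The paper arrives at the same bounds by a purely algebraic device rather than calculus: for each endpoint $c\in\{\tfrac{q+1}{q},\tfrac{q}{q-1}\}$ it expands the product $\bigl(I(q^k)-c\bigr)\bigl(I(n^2)-c\bigr)$ and uses $I(q^k)I(n^2)=2$ to rewrite this as $2-c\bigl(I(q^k)+I(n^2)\bigr)+c^2$, whose sign is determined by the ordering $\tfrac{q+1}{q}\le I(q^k)<\tfrac{q}{q-1}<I(n^2)$. This product trick and your derivative computation are really two presentations of the same fact (that $f(x)\le f(c)$ iff $(x-c)(2/x-c)\ge 0$), but the packaging differs: the paper never needs to mention $\sqrt{2}$ or a derivative, at the cost of having to first establish $\tfrac{q}{q-1}<I(n^2)$ separately; your route makes the single-variable structure explicit and the role of $q\ge 5$ transparent. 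Either approach is fine here.
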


\begin{proof}
Starting from the (trivial) inequalities
$$\frac{q + 1}{q} \le I(q^k) < \frac{q}{q - 1}$$
we get
$$\frac{2(q - 1)}{q} < I(n^2) = \frac{2}{I(q^k)} \le \frac{2q}{q + 1}.$$
Notice that
$$\frac{q}{q - 1} < \frac{2(q - 1)}{q}$$
for $q$ an Euler prime. Consequently
$$I(q^k) < I(n^2)$$
a result which was mentioned earlier in Remark \ref{Remark1}.

Consider the product $\left(I(q^k) - \displaystyle\frac{q + 1}{q}\right)\left(I(n^2) - \displaystyle\frac{q + 1}{q}\right)$. This product is nonnegative since $\displaystyle\frac{q + 1}{q} \le I(q^k) < I(n^2)$.  Expanding the product and simplifying using the equation $I(q^k)I(n^2) = 2$, we get the upper bound $U(q) = \displaystyle\frac{3q^2 + 2q + 1}{q(q + 1)}$ for the sum $I(q^k) + I(n^2)$.

Next, consider the product $\left(I(q^k) - \displaystyle\frac{q}{q - 1}\right)\left(I(n^2) - \displaystyle\frac{q}{q - 1}\right)$. This product is negative since $I(q^k) < \displaystyle\frac{q}{q - 1} < I(n^2)$. Again, expanding the product and simplifying using the equation $I(q^k)I(n^2) = 2$, we get the lower bound $L(q) = \displaystyle\frac{3q^2 -4q + 2}{q(q - 1)}$ for the same sum $I(q^k) + I(n^2)$.

A quick double-check gives you that, indeed, the lower bound $L(q)$ is less than the upper bound $U(q)$, if $q$ is an Euler prime.
\end{proof}

\begin{remrk}\label{Remark2}
Notice that, from the proof of Lemma \ref{Lemma2}, we have
$$\frac{q}{q - 1} < \frac{2(q - 1)}{q}$$
which implies that ${\left({\displaystyle\frac{q}{q - 1}}\right)}^2 < 2$. Thus
$$1 < I(q^k) < \frac{q}{q - 1} < \sqrt{2} = \frac{2}{\sqrt{2}} < \frac{2(q - 1)}{q} < I(n^2) < 2.$$
Also, observe from Lemma \ref{Lemma1} that
$$I(q^k) < \frac{5}{4} < \sqrt{\frac{8}{5}} < \sqrt{\frac{2}{I(q^k)}}$$
which implies that $I(q^k)\sqrt{I(q^k)} < \sqrt{2}.$  It follows that
$$I(q^k) < \sqrt[3]{2}.$$
We get the rational approximation $\sqrt[3]{2} \approx 1.259921$.
\end{remrk}

We give explicit bounds for the sum $I(q^k) + I(n^2)$ in the following corollary.

\begin{cor}\label{Corollary1}
Let $N$ be an odd perfect number given in Eulerian form. Then we have the following (explicit) numerical bounds:
$$2.85 = \frac{57}{20} < I(q^k) + I(n^2) < 3$$
with the further result that they are best-possible.
\end{cor}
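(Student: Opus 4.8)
The plan is to obtain the numerical bounds by optimizing the rational functions $L(q)$ and $U(q)$ from Definition \ref{LowerandUpperBounds} over the admissible range of the Euler prime, and then to invoke the sandwich $L(q) < I(q^k) + I(n^2) \le U(q)$ from Lemma \ref{Lemma2}. Since $q \equiv 1 \pmod 4$ forces $q \ge 5$, the entire problem reduces to understanding the behavior of $L$ and $U$ on the interval $[5, \infty)$, with $q = 5$ an actually admissible value (the smallest possible Euler prime).

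First I would rewrite the two bounds, by polynomial division, in the suggestive forms
$$L(q) = 3 - \frac{q - 2}{q(q - 1)}, \qquad U(q) = 3 - \frac{q - 1}{q(q + 1)}.$$
From these the upper bound is immediate: since $\frac{q-1}{q(q+1)} > 0$ for every $q \ge 5$, we have $U(q) < 3$, and combining this with $I(q^k) + I(n^2) \le U(q)$ gives the strict inequality $I(q^k) + I(n^2) < 3$. For the lower bound I would show that the correction term $\frac{q-2}{q(q-1)}$ is decreasing on $[5,\infty)$ — equivalently, that $L(q)$ is increasing there — so that $L$ attains its minimum over the admissible primes at $q = 5$. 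A short derivative computation does this: the numerator of the derivative of $\frac{q-2}{q(q-1)}$ is $-q^2 + 4q - 2$, whose roots $2 \pm \sqrt{2}$ both lie below $5$, so the numerator is negative for all $q \ge 5$. Evaluating then yields $L(5) = 3 - \frac{3}{20} = \frac{57}{20} = 2.85$, whence $I(q^k) + I(n^2) > L(q) \ge L(5) = \frac{57}{20}$.

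Finally, for the claim that the bounds are best-possible, I would trace the inequalities in Lemma \ref{Lemma2} back to the sharp endpoints $\frac{q+1}{q} \le I(q^k) < \frac{q}{q-1}$ (as recorded in Remark \ref{Remark2}). The value $U(q)$ is attained exactly when $k = 1$, i.e. when $I(q^k) = \frac{q+1}{q}$; and since the same analysis shows $U(q) \uparrow 3$ as $q \to \infty$, no constant smaller than $3$ can serve as a uniform upper bound. Dually, $L(q)$ is the infimum of the sum approached as $k \to \infty$, i.e. as $I(q^k) \to \frac{q}{q-1}$, and at the smallest admissible prime $q = 5$ this infimum is exactly $\frac{57}{20}$, so no constant larger than $\frac{57}{20}$ can serve as a uniform lower bound.

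The step I expect to be the main obstacle is the "best-possible" assertion, not the inequalities themselves. The care required there is conceptual rather than computational: one must be explicit that "best-possible" refers to the sharpness of the analytic envelopes $L$ and $U$ under the structural constraints $q \ge 5$, $q \equiv k \equiv 1 \pmod 4$, and $\frac{q+1}{q} \le I(q^k) < \frac{q}{q-1}$ — realized in the limiting regimes $k = 1,\ q \to \infty$ (for the upper bound) and $q = 5,\ k \to \infty$ (for the lower bound) — rather than to a value literally exhibited by a known odd perfect number. Everything else is a routine monotonicity check plus the single evaluation $L(5) = \tfrac{57}{20}$.
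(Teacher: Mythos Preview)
Your proposal is correct and follows exactly the route the paper indicates: the paper's own ``proof'' merely states that the corollary follows from Lemma~\ref{Lemma2} together with basic differential calculus, leaving the details as an exercise, and your argument supplies precisely those details (the rewriting $L(q)=3-\frac{q-2}{q(q-1)}$, $U(q)=3-\frac{q-1}{q(q+1)}$ appears verbatim in Remark~\ref{Remark4}, and your interpretation of ``best-possible'' via the limiting regimes matches Remark~\ref{Remark3}). There is nothing to correct or to contrast.
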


\begin{proof}
This corollary can be proved using Lemma \ref{Lemma2} and basic differential calculus, and is left as an exercise to the interested reader.
\end{proof}

\begin{remrk}\label{Remark3}
As remarked by Joshua Zelinsky in 2005: ``Any improvement on the upper bound of 3 would have (similar) implications for all arbitrarily large primes and thus would be a very major result.'' (e.g. $L(q) < 2.99$ implies $q \le 97$.)  In this sense, the inequality $$2.85 = \frac{57}{20} < I(q^k) + I(n^2) < 3$$ is best-possible.
\end{remrk}

\begin{remrk}\label{Remark4}
Note that, from Lemma \ref{Lemma2}, 
$$L(q) = \frac{3q^2 -4q + 2}{q(q - 1)} = 3 - \frac{q - 2}{q(q - 1)}$$ 
and 
$$U(q) = \frac{3q^2 + 2q + 1}{q(q + 1)} = 3 - \frac{q - 1}{q(q + 1)}.$$
Observe that, when $L(x)$ and $U(x)$ are viewed as functions on the domain $D = \mathbb{R}\setminus\{-1, 0, 1\}$, then 
$$L(x + 1) = U(x)$$ 
and 
$$U(2) = U(3) = L(3) = \frac{17}{6} < 2.84.$$
\end{remrk}

\section{Sorli's Conjecture [2003]}

We now state Sorli's conjecture on odd perfect numbers:

\begin{conj}\label{Conjecture2}
If $N$ is an odd perfect number with Euler prime $q$ then $q || N$.
\end{conj}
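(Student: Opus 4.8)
The plan is to establish the statement unconditionally, that is to prove $q \,\|\, N$ (equivalently $k = \nu_q(N) = 1$) by ruling out every admissible value $k \ge 5$. Since the Eulerian form forces $k \equiv 1 \pmod 4$, the smallest exponent exceeding $1$ is $k = 5$, so it suffices to derive a contradiction from the hypothesis $k \ge 5$ together with the perfection equation $\sigma(q^k)\sigma(n^2) = 2q^k n^2$ and the bounds already assembled in Section 2.

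First I would record the exact arithmetic consequence of perfection at the Euler prime. Applying Theorem \ref{Theorem} to the index of $q$ shows that $\rho := \sigma(n^2)/q^k$ is a positive integer with $\rho \ge 3$; substituting $\sigma(n^2) = \rho\, q^k$ into $\sigma(q^k)\sigma(n^2) = 2q^k n^2$ and cancelling $q^k$ yields the clean identity $\rho\,\sigma(q^k) = 2n^2$. Because $q \equiv 1 \pmod 4$ and $k$ is odd, $\sigma(q^k) = 1 + q + \cdots + q^k \equiv k+1 \equiv 2 \pmod 4$, so $\sigma(q^k)/2$ is odd and coprime to $q$; comparing $2$-adic valuations then forces $\rho$ odd as well. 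Hence $n^2 = \rho\cdot\bigl(\sigma(q^k)/2\bigr)$ is a factorization of the odd square $n^2$ into two odd $q$-free factors, and its prime support must absorb the whole of $\sigma(q^k)$. This is the lever I would use against large $k$: by Bang--Zsygmondy, for $k \ge 5$ each divisor $m \mid (k+1)$ with $m > 1$ contributes a primitive prime of $q^{m}-1$ to the numerator $q^{k+1}-1$ of $\sigma(q^k)$, so $\sigma(q^k)$ carries several distinct odd primes, each of which must then divide $n$.

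Next I would attempt to convert this structural surplus into a numerical contradiction, using the fact that the available squeeze is sharp in the abundancy index. Lemma \ref{Lemma1}, Lemma \ref{Lemma2} and Remark \ref{Remark2} confine $I(q^k)$ to the narrow interval $\bigl((q+1)/q,\;q/(q-1)\bigr)$ with $I(q^k) < \sqrt[3]{2}$, Corollary \ref{Corollary} gives $q^k < (2/3)n^2$, and Corollary \ref{Corollary1} pins the sum $I(q^k) + I(n^2)$ into $(57/20,\,3)$. Rewriting the identity above as $I(q^k) = (2/\rho)(n^2/q^k)$ ties the exponent $k$ to the odd integer $\rho$ and the ratio $n^2/q^k$. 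Observe that $k = 1$ is exactly the case $I(q^k) = (q+1)/q$, the minimal value of the index; the whole problem is therefore to prove that this minimum is forced. The target is to show that $k \ge 5$ pushes $I(q^k)$ so close to $q/(q-1)$ that, after matching the cyclotomic factorization of $\sigma(q^k)$ against the square structure of $n^2 = \rho\,\sigma(q^k)/2$, either $\rho$ is driven large enough to eject $I(q^k) + I(n^2)$ from $(57/20,\,3)$, or a primitive prime of $\sigma(q^k)$ occurs to an odd power that $\rho$ cannot repair without violating $\rho \ge 3$ being odd.

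The hard part — and here I expect a genuine rather than merely technical obstruction — is precisely this last separation. The map $k \mapsto I(q^k)$ is monotone, but its total variation over $k \in \{1,5,9,\dots\}$ is only $q/(q-1) - (q+1)/q = 1/\bigl(q(q-1)\bigr)$, which tends to $0$; already for $q = 5$ one computes $I(5^5) < 5/4 < \sqrt[3]{2}$, so the analytic bounds of Section 2 cannot by themselves distinguish $k = 1$ from $k = 5$. Any successful argument must therefore extract an \emph{arithmetic} obstruction, most plausibly by showing that the square condition imposed by $n^2 = \rho\,\sigma(q^k)/2$ is incompatible with the primitive-prime factorization of $\sigma(q^k)$ when $k \ge 5$, uniformly in $q$. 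Isolating that incompatibility is the crux on which the unconditional statement turns, and it is where the present machinery must be supplemented.
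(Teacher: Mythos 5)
You are attempting to prove a statement that the paper itself presents only as a conjecture: this is Sorli's conjecture, and the paper contains no proof of it. Everything the paper establishes in its direction is conditional --- Lemma \ref{Lemma3} ($n < q$ implies $k = 1$), Lemma \ref{Lemma4}, Lemma \ref{Lemma5}, and Theorem \ref{Theorem2} give sufficient conditions, and the abstract claims nothing stronger. So there is no proof in the paper to compare yours against, and an unconditional argument would be a result well beyond the paper's scope. Your preliminary reductions are in fact correct and nicely assembled: taking $i$ to be the index of $q$ in Theorem \ref{Theorem} does give an integer $\rho = \sigma(n^2)/q^k \ge 3$; the identity $\rho\,\sigma(q^k) = 2n^2$ follows by cancellation; $\sigma(q^k) \equiv k+1 \equiv 2 \pmod 4$ forces $\rho$ odd; and Zsygmondy does supply, for $k \ge 5$, several distinct odd primes of $\sigma(q^k)$ all of which must divide $n$. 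None of this is in dispute.

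The genuine gap is the one you name yourself in the final paragraph: no contradiction is ever derived from $k \ge 5$, so the proposal is a research program, not a proof. Moreover, the dichotomy you offer as the intended finish does not withstand scrutiny. The first horn --- ``$\rho$ is driven large enough to eject $I(q^k) + I(n^2)$ from $(57/20, 3)$'' --- cannot occur, because Corollary \ref{Corollary1} is unconditional: the sum lies in that interval for \emph{every} odd perfect number, whatever $\rho$ is, so no choice of $\rho$ yields a contradiction there. The second horn --- a primitive prime of $\sigma(q^k)$ occurring to an odd power that the odd number $\rho$ ``cannot repair'' --- has no visible obstruction either: $\rho$ is constrained only to be odd, at least $3$, and prime to $q$, and an odd prime appearing to odd multiplicity in $\sigma(q^k)/2$ is freely absorbed by giving it odd multiplicity in $\rho$; parity of exponents imposes no conflict. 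Your own computation $I(5^5) = 3906/3125 < 5/4$ correctly shows the analytic bounds of Section 2 are insensitive to the exponent, which means the entire burden falls on an arithmetic incompatibility that is not exhibited --- and isolating such an incompatibility is essentially the open problem itself, not a step that can be deferred. As written, the proposal proves nothing beyond what Theorem \ref{Theorem} already gives.
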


\begin{remrk}\label{Remark6}
In other words, if the odd perfect number $N$ is given in the Eulerian form $N = {q^k}{n^2}$, then Sorli's conjecture predicts that $k = \nu_{q}(N) = 1$.  Note that, in general by Remark \ref{Remark1} we have 
$$q^k<\sqrt{N}={q^{k/2}}n$$
which gives $q^{k/2} < n$. Sorli's conjecture, if proved, will enable easier computations with odd perfect numbers because then the abundancy index $I(q^k)$ for the Euler factor $q^k$ collapses to $I(q) = (q + 1)/q$.
\end{remrk}

We give a set of sufficient conditions for Sorli's conjecture to hold. (In that direction, recall that the components $q^k$ and $n^2$ of the odd perfect number $N = {q^k}{n^2}$ are related via the inequality $q^k < n^2$, as mentioned in Remark \ref{Remark1}.)

\begin{lemm}\label{Lemma3}
Let $N$ be an odd perfect number given in Eulerian form.  If $n < q$, then $k = 1$.
\end{lemm}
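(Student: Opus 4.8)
The plan is to derive the conclusion from a single chain of inequalities, the hard part of which has already been done in Corollary \ref{Corollary}. The strategy is to pin $q^{k/2}$ below $n$ using the already-established size comparison between the Euler factor and $n^2$, then use the hypothesis $n < q$ to squeeze the exponent $k$ into the interval $[1,2)$, where the only admissible integer is $k = 1$.

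Concretely, I would first invoke Corollary \ref{Corollary}, which gives $q^k < (2/3)n^2 < n^2$; taking positive square roots yields $q^{k/2} < n$, exactly the inequality already recorded in Remark \ref{Remark6}. Next I would append the hypothesis $n < q$ to obtain the chain
$$q^{k/2} < n < q = q^1.$$
Since $q$ is an Euler prime we have $q \ge 5 > 1$, so the map $x \mapsto q^x$ is strictly increasing; hence $q^{k/2} < q^1$ forces $k/2 < 1$, i.e.\ $k < 2$. Finally, because $k = \nu_q(N)$ is a positive integer (indeed $k \equiv 1 \pmod 4$), the only value satisfying $1 \le k < 2$ is $k = 1$, which is the assertion of the lemma.

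I do not expect any genuine obstacle here: all of the number-theoretic substance is packaged inside the bound $q^k < n^2$ supplied by Corollary \ref{Corollary} (equivalently Remark \ref{Remark1}), and the role of the hypothesis $n < q$ is merely to close the loop $q^{k/2} < n < q$ so that the admissible range of the integer exponent collapses to the single value $1$. The only point deserving a moment's care is the monotonicity step converting $q^{k/2} < q$ into $k < 2$, which is legitimate precisely because $q > 1$; no further case analysis or estimation is needed.
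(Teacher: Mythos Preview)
Your proof is correct and follows essentially the same route as the paper: both arguments invoke Corollary~\ref{Corollary} to obtain $q^k < n^2$, combine this with the hypothesis $n < q$ to trap $q^k$ below $q^2$, and conclude $k=1$. The only cosmetic difference is that the paper writes the chain directly as $q \le q^k < n^2 < q^2$, whereas you pass through the square-root form $q^{k/2} < n < q$; these are algebraically equivalent.
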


\begin{proof}
If $n < q$, then by Corollary \ref{Corollary}, $q \le q^k < n^2 < q^2$ so $k = 1$. 
\end{proof}

\begin{remrk}\label{Remark7} Via a similar argument, we get that $n < q^2$ also implies $k = 1$.
\end{remrk}

\begin{lemm}\label{Lemma4}
Let $N$ be an odd perfect number given in Eulerian form.  If $\sigma(n) \leq \sigma(q)$, then $k = 1$.
\end{lemm}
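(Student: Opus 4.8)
The plan is to reduce the hypothesis to the condition $n < q$ and then invoke Lemma \ref{Lemma3}. First I would record that, since $q$ is prime, $\sigma(q) = q + 1$, so the hypothesis $\sigma(n) \le \sigma(q)$ reads simply $\sigma(n) \le q + 1$. The one elementary fact I would then use is that $n > 1$ (indeed $n^2 = N/q^k$ carries all of the prime factors of $N$ other than $q$, and $N$ has at least nine distinct prime factors), so that $\sigma(n) \ge n + 1 > n$.

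Chaining these observations gives $n < \sigma(n) \le \sigma(q) = q + 1$, and since $n$ and $q$ are integers this already forces $n \le q$. To upgrade this to the strict inequality $n < q$ required by Lemma \ref{Lemma3}, I would appeal to the coprimality constraint $\gcd(q, n) = 1$ built into the Eulerian form: were $n = q$, we would have $\gcd(q, n) = q > 1$, a contradiction, so $n \ne q$ and hence $n < q$. Lemma \ref{Lemma3} then yields $k = 1$ immediately, completing the argument.

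The ``hard part'' here is really only the boundary case: the inequality $\sigma(n) \le \sigma(q)$ by itself delivers just $n \le q$, and it is the coprimality $\gcd(q, n) = 1$ that eliminates the borderline value $n = q$ and produces the strict bound feeding Lemma \ref{Lemma3}. As an alternative route that sidesteps even this point, one could instead substitute the sharper estimate $I(n) > \sqrt{8/5}$ from Lemma \ref{Lemma1} into $n = \sigma(n)/I(n) < (q+1)\sqrt{5/8}$ and verify that $(q+1)\sqrt{5/8} < q$ for every Euler prime $q \ge 5$, again giving $n < q$; but I expect the self-contained argument via $\sigma(n) > n$ to be the cleaner presentation.
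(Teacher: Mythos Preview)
Your argument is correct and, like the paper's proof, it reduces to establishing $n < q$ and then invoking Lemma~\ref{Lemma3}. The route to $n < q$, however, is genuinely different. The paper exploits the abundancy-index inequality $I(q) < I(n)$ from Lemma~\ref{Lemma1}: rewriting this as $\sigma(q)/\sigma(n) < q/n$ and combining it with the hypothesis $1 \le \sigma(q)/\sigma(n)$ yields the strict bound $n < q$ in one stroke, with no boundary case to handle. Your approach is more elementary, using only $\sigma(q) = q+1$, the trivial estimate $\sigma(n) \ge n+1$ (from $n>1$), and then the coprimality $\gcd(q,n)=1$ to dispose of the borderline possibility $n = q$. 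What your version buys is complete independence from Lemma~\ref{Lemma1}; what the paper's version buys is that the strict inequality falls out automatically, without needing the separate coprimality argument. Your alternative sketch via $I(n) > \sqrt{8/5}$ is also valid but, as you note, less clean than either of the two main arguments.
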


\begin{proof}
Suppose that $\sigma(n) \leq \sigma(q)$.  Since $I(q) < I(n)$ by Lemma \ref{Lemma1}, it follows that $\displaystyle\frac{\sigma(q)}{\sigma(n)} < \displaystyle\frac{q}{n}$.  By our assumption, $1 \leq \displaystyle\frac{\sigma(q)}{\sigma(n)}$, whereupon we get $n < q$.  Therefore, Lemma \ref{Lemma3} gives $k = 1$. 
\end{proof}

\begin{lemm}\label{Lemma5}
Let $N$ be an odd perfect number given in Eulerian form.  If $\displaystyle\frac{\sigma(n)}{q} < \displaystyle\frac{\sigma(q)}{n}$, then $k = 1$.
\end{lemm}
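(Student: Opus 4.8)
The plan is to follow the same reduction strategy used in Lemmas \ref{Lemma3} and \ref{Lemma4}: rather than attack the exponent $k$ directly, I would deduce from the hypothesis the size comparison $n < q$, and then invoke Lemma \ref{Lemma3} to conclude $k = 1$. So the entire task reduces to extracting $n < q$ from the single inequality $\sigma(n)/q < \sigma(q)/n$.

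First I would clear denominators in the hypothesis. Since $q$, $n$, $\sigma(q)$, $\sigma(n)$ are all positive, cross-multiplying gives the equivalent statement $n\,\sigma(n) < q\,\sigma(q)$. Next I would bring in the one piece of external information available, namely Lemma \ref{Lemma1}, which yields $I(q) < I(n)$ (using that $I(q) = I(q^1) \le I(q^k) < I(n)$, since $I(q^k)$ is nondecreasing in $k$). Writing $I(q) < I(n)$ as $\sigma(q)/q < \sigma(n)/n$ and again clearing denominators produces the companion inequality $n\,\sigma(q) < q\,\sigma(n)$.

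The key step is then to multiply these two inequalities. Both sides are positive, so $\bigl(n\,\sigma(n)\bigr)\bigl(n\,\sigma(q)\bigr) < \bigl(q\,\sigma(q)\bigr)\bigl(q\,\sigma(n)\bigr)$; the common factor $\sigma(n)\,\sigma(q)$ cancels and leaves $n^2 < q^2$, hence $n < q$. Applying Lemma \ref{Lemma3} finishes the argument.

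I do not expect a serious obstacle here: the proof is a short algebraic manipulation once one notices that the hypothesis and the abundancy inequality $I(q) < I(n)$ are, after clearing denominators, two inequalities whose product eliminates all the $\sigma$-values. The only point requiring care is justifying $I(q) < I(n)$ rather than merely $I(q^k) < I(n)$, which is immediate from the monotonicity of $k \mapsto I(q^k)$; and checking that every quantity involved is positive, so that cross-multiplication and the final cancellation preserve the strict inequalities.
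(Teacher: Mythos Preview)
Your proof is correct and follows essentially the same plan as the paper: combine the hypothesis with the inequality $I(q) < I(n)$ from Lemma~\ref{Lemma1} to deduce $n < q$, then invoke Lemma~\ref{Lemma3}. The only cosmetic difference is that the paper \emph{adds} the two inequalities $\sigma(n)/q < \sigma(q)/n$ and $\sigma(q)/q < \sigma(n)/n$ to obtain $(\sigma(n)+\sigma(q))/q < (\sigma(n)+\sigma(q))/n$ and hence $n < q$, whereas you cross-multiply and then \emph{multiply} them; both manipulations are equally short and equally valid.
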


\begin{proof}
If $\displaystyle\frac{\sigma(n)}{q} < \displaystyle\frac{\sigma(q)}{n}$, then since $\displaystyle\frac{\sigma(q)}{q} < \displaystyle\frac{\sigma(n)}{n}$, it follows that $\displaystyle\frac{\sigma(n) + \sigma(q)}{q} < \displaystyle\frac{\sigma(q) + \sigma(n)}{n}$, whereupon we get $n < q$.  By Lemma \ref{Lemma3}, we have $k = 1$.
\end{proof}

\begin{thm}\label{Theorem2}
Let $N$ be an odd perfect number given in Eulerian form.  Then $n < q$ if and only if $N < q^3$.
\end{thm}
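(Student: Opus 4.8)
The plan is to split the biconditional into its two implications, each of which reduces to an elementary inequality manipulation on the factorization $N = q^k n^2$ once the earlier results are in hand. Both directions turn out to be short; the work is really in assembling Lemma \ref{Lemma3} and the bare fact that $k \ge 1$.

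For the forward implication, I would assume $n < q$ and invoke Lemma \ref{Lemma3}, which tells us immediately that $k = 1$. Substituting $k = 1$ into the Eulerian form gives $N = q n^2$, and then the hypothesis $n < q$ yields $n^2 < q^2$, so that $N = q n^2 < q \cdot q^2 = q^3$. This direction therefore rests entirely on Lemma \ref{Lemma3} (and hence, indirectly, on Corollary \ref{Corollary}, which supplies the bound $q^k < (2/3)n^2$ driving that lemma).

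For the converse, I would assume $N < q^3$ and exploit only the fact that $k \ge 1$ (which holds since $q \equiv k \equiv 1 \pmod 4$ forces $k \ge 1$). From $k \ge 1$ we get $q^k \ge q$, so that $q n^2 \le q^k n^2 = N < q^3$. Cancelling the positive factor $q$ gives $n^2 < q^2$, i.e. $n < q$, as desired.

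I do not expect a genuine obstacle here, since neither direction requires any new idea beyond the cited results. The only point worth flagging is the asymmetry between the two halves: the forward direction genuinely needs the strength of Lemma \ref{Lemma3} to pin down $k = 1$, whereas the converse needs nothing more than $k \ge 1$ and does not assume $k = 1$ at the outset — though combining the converse with Lemma \ref{Lemma3} shows that $k = 1$ in fact follows a posteriori once $n < q$ has been established. I would simply make sure the cancellation of $q$ and the step from $n^2 < q^2$ to $n < q$ are stated with $q, n > 0$ explicitly in view.
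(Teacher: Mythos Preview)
Your proof is correct and follows essentially the same approach as the paper: one implication uses only $k \ge 1$, while the other invokes Lemma~\ref{Lemma3} to force $k = 1$. The only cosmetic difference is that you prove the direction $n < q \Rightarrow N < q^3$ directly, whereas the paper argues its contrapositive by contradiction; the underlying computation is identical.
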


\begin{proof}
Suppose that ${q^k}{n^2} = N < q^3$.  Then $n^2 < q^{3 - k} \leq q^2$, which implies that $n < q$.  We prove the other direction via the contrapositive.  Suppose that $q < N^{1/3}$.  We want to show that $q < n$.  Assume to the contrary that $n < q$.  By Lemma \ref{Lemma3}, $k = 1$.  Therefore, we have $$q < N^{1/3} = {q^{k/3}}{n^{2/3}} = {q^{1/3}}{n^{2/3}} < {q^{1/3}}{q^{2/3}} = q$$ which is a contradiction.
\end{proof}

\begin{remrk}\label{Remark8}
If $N$ is an odd perfect number given in Eulerian form, then since $q^k < n^2$ by Corollary \ref{Corollary}, we have $q^2 \leq q^{2k} < N$.  Recently in \cite{AcquaahKonyagin}, Acquaah and Konyagin have been able to show that the Euler prime $q$ satisfies $q < (3N)^{1/3}$.
\end{remrk}

\section{Acknowledgment}
The author sincerely thanks the anonymous referees who have made several corrections and suggestions, which helped in improving the style of the paper.  The author would also like to thank Severino Gervacio, Arlene Pascasio, Fidel Nemenzo, Jose Balmaceda, Julius Basilla, Blessilda Raposa, Carl Pomerance, Richard Brent and Gerry Myerson for sharing their expertise.  Lastly, the author wishes to express his deepest gratitude to Joshua Zelinsky from whom he derived the inspiration to pursue odd perfect numbers as a research topic.

\bigskip
\hrule
\bigskip

\noindent 2010 {\it Mathematics Subject Classification}:
Primary 11A05; Secondary 11J25, 11J99.

\noindent \emph{Keywords: } 
odd perfect number, Sorli's conjecture, Euler prime.

\bigskip
\hrule
\bigskip


\begin{thebibliography}{14}
\bibitem{AcquaahKonyagin}
P.~Acquaah and S.~Konyagin, On Prime Factors of Odd Perfect Numbers, to appear in \emph{Int. J. Number Theory}.
\bibitem{Dandapat}
G.~G.~Dandapat, J.~L.~Hunsucker and C.~Pomerance, Some new results on odd perfect numbers, \emph{Pacific J. Math.} $\mathbf{57}$ (1975), 359--364.  Available at \url{http://projecteuclid.org/euclid.pjm/1102905990}.
\bibitem{Dris1}
J.~A.~B.~Dris, On the components of an odd perfect number, \emph{Electronic Proceedings of the 9th Science and Technology Congress}, De La Salle University, Manila, Philippines, July 4, 2007.  Available at \url{http://arxiv.org/abs/1206.3230}.
\bibitem{Dris2}
J.~A.~B.~Dris, Solving the Odd Perfect Number Problem: Some Old and New Approaches, M.~S.~ Math thesis, De La Salle University, Manila, Philippines, 2008.  Available at \url{http://arxiv.org/abs/1204.1450}.
\bibitem{Dris3}
J.~A.~B.~Dris, Solving the odd perfect number problem: some new approaches, \emph{Electronic Proceedings of the 11th Science and Technology Congress}, De La Salle University, Manila, Philippines, Sept. 22, 2009.  Available at \url{http://arxiv.org/abs/1206.1548}.
\bibitem{Nathanson}
M.~B.~Nathanson, \emph{Elementary Methods in Number Theory}, Springer-Verlag, 2000.
\bibitem{Nielsen1}
P.~P.~Nielsen, Odd perfect numbers have at least nine distinct prime factors, \emph{Math. Comp.} $\mathbf{76}$ (2007), 2109–-2126.
\bibitem{Nielsen2}
P.~P.~Nielsen, An upper bound for odd perfect numbers, \emph{Integers} $\mathbf{3}$ (2003), A14–-A22.
\bibitem{Ochem}
P.~Ochem, Odd perfect numbers, \url{http://www.lirmm.fr/~ochem/opn/}.
\bibitem{Sandor1}
J.~Sandor and B.~Crstici, Perfect numbers: old and new issues - perspectives, in J.~Sandor and B.~Crstici, eds., \emph{Handbook of Number Theory vol II}, Kluwer Academic Publishers, 2004, pp.\ 15--98.
\bibitem{Sandor2}
J.~Sandor, D.~Mitrinovic, and B.~Crstici, Sum-of-divisors function, generalizations, analogues - perfect numbers and related problems, in J.~Sandor, D.~Mitrinovic, and B.~Crstici (eds.), \emph{Handbook of Number Theory vol I}, Thomson Gale, 2006, pp.\ 77--120.
\bibitem{Sorli}
R.~M.~Sorli, Algorithms in the Study of Multiperfect and Odd Perfect Numbers, Ph.~D. Thesis, University of Technology, Sydney, 2003.  Available at \url{http://epress.lib.uts.edu.au/dspace/handle/2100/275}.
\bibitem{Suryanarayana}
D.~Suryanarayana, There is no odd super perfect number of the form $p^{2\alpha}$, \emph{Elem. Math.} $\mathbf{24}$ (1973), 148--150.  Available at \url{http://www.digizeitschriften.de/resolveppn/GDZPPN002078856}.
\bibitem{Various}
``Great Internet Mersenne Prime Search'', \url{http://www.mersenne.org/prime.htm}.
\end{thebibliography}
\end{document}